\newtheorem{theorem}{Theorem}[section]
\newtheorem{lemma}{Lemma}[section]
\newtheorem{definition}{Definition}[section]
\newtheorem{proposition}{Proposition}[section]
\newtheorem{remark}{Remark}[section]
\newtheorem{example}{Example}[section]
\DeclareTextSymbol{\tcrp}{TS1}{'251}
\DeclareTextSymbolDefault{\tcrp}{TS1}
\newcommand{\ZZ}[1]{\mathbb{Z}/#1\mathbb{Z}}
\newcommand{\card}[1]{\textbf{card}\left( #1 \right)}
\newcommand{\parties}[1]{\mathcal{P}\left( #1 \right)}
\begin{document}
\setcounter{page}{1}

\thispagestyle{empty} 

\noindent \textbf{Notes on Number Theory and Discrete
Mathematics \newline
Print ISSN 1310--5132, Online ISSN 2367--8275 \newline
XXXX, Volume XX, Number X, XXX--XXX \newline
DOI: 10.7546/nntdm.XXXX}  
\vspace{11mm}

\begin{center}
{\LARGE \bf  Distance between consecutive elements of \\[4mm] the multiplicative group of integers modulo $n$} 
\vspace{8mm}

{\Large \bf Steven Brown$^1$}
\vspace{3mm}

$^1$ 41 Boulevard du Roi, 78000 Versailles \\
e-mail: \url{steven.brown.math@gmail.com}

\end{center}

\noindent
\noindent {\bf Received:}  02 October 2023 \hfill {\bf Revised:} DD Month XXXX \\
{\bf Accepted:} DD Month XXXX  \hfill {\bf Online First:} DD Month XXXX \\[4mm] 
{\bf Abstract:}
For a prime number $p$, we consider its primorial $P:=p\#$ and $U(P):={\left(\ZZ{P}\right)}^\times$ the set of elements of the multiplicative group of integers modulo $P$ which we represent as points anticlockwise on a circle of perimeter $P$. These points considered with wrap around modulo $P$ are those not marked by the Eratosthenes sieve algorithm applied to all primes less than or equal to $p$. 

In this paper, we are mostly concerned with providing formulas to count the number of gaps of a given even length $D$ in $U(P)$ which we note $K(D,P)$. This work, presented with different notations is closely related to \cite{holt2015combinatorics}. We prove the formulas in three steps. Although only the last step relates to the problem of gaps in the Eratosthenes sieve (see section \ref{sec:gaps} page \pageref{sec:gaps} ) the previous formulas may be of interest to study occurrences of defined gaps sequences.
\begin{itemize}
\item For a positive integer $n$, we prove a general formula based on the inclusion-exclusion principle to count the number of occurrences of configurations\footnote{A defined sequence of gaps between the elements of the subset; this is referred to as a constellation in \cite{holt2015combinatorics}} in any subset of $\ZZ{n}$. (see equation \ref{eq:cccf} in theorem \ref{thm:cccf})
\item For a a square-free integer $P$, we particularize this formula when the subset of interest is $U(P)$. (see equation \ref{eq:main_formula} in theorem \ref{thm:main_formula})
\item For a prime $p$ and its primorial $P:=p\#$, we particularize the formula again to study gaps in $U(P)$. Given a positive integer $D$ representing a distance on the circle, we give formulas to count $K(D,P)$ the number of gaps of length $D$ between elements of $U(P)$. (see equation \ref{eq:kappa_for_pairs} and section \ref{sec:KDP})
\end{itemize}

In addition, we provide a formula (see equation \ref{eq:formula_MNi} in theorem \ref{thm:formula_MNi}) to count the number of occurrences of gaps of an even length $N$ that contain exactly $i$ elements of $U(P)$.

{\bf Keywords:} Sieves, Sieve of Eratosthenes, Modular arithmetic, Multiplicative group of integers modulo, Chinese remainder theorem, Inclusion-exclusion principle, Euler's phi function, Nagell's totient function, Prime numbers, Jacobsthal function, Primorial. \\ 
{\bf 2020 Mathematics Subject Classification:} 11A07, 11A41, 11B99, 11N05, 11N35, 11N99, 03E99. 
\vspace{5mm}

\section*{Acknowledgement}

I would like to thank my wife Natallia for her support and patience and to thank William Gasarch for his numerous proofreadings of the paper that greatly improved its readability. Thanks to Martin Raab for giving counterexamples to some of the conjectures that I initially made and have now been removed.
\section{Introduction}

\subsection{Notations}\label{sec:notations}

This section details some of the notations used in this paper.

\begin{itemize}

\item We denote the set of prime numbers by $\mathbb{P}$.

\item For a prime number $p$, we write $P:=p\#$ its primorial which is the product of all prime numbers less than or equal to $p$.

\item For $x \in \ZZ{n}$, we note $r(x)$ the only integer that represents $x$ in $\{0,\ldots ,n-1\}$.

\item To facilitate reading, for an integer $n$ we write $U(n)$ the set of elements of the multiplicative group of $\ZZ{n}$.
\begin{equation*}
U(n):=(\ZZ{n})^\times
\end{equation*}

\item For an integer $n$ and a set of integers $A$ we write $A/n\mathbb{Z}$ the subset of $\ZZ{n}$ composed of all the elements from $A$ taken modulo $n$.
\begin{equation*}
A/n\mathbb{Z} := \left\{ a\mod n \; \mid \; a\in A \right\}
\end{equation*}

\item For an integer $n$ and $A$ and $B$ two subsets of $\ZZ{n}$ we note
\begin{align*}
A+B := \left\{ a+b \; \mid \; a\in A \; \text{ and } b\in B \right\} \\
A-B := \left\{ a-b \; \mid \; a\in A \; \text{ and } b\in B \right\} 
\end{align*}

\item For an integer $n$, we write $\mathcal{S}(n)$ and we call support the set of all its prime divisors.
\begin{equation*}
\mathcal{S}(n) = \left\{ q \quad | \quad q \in \mathbb{P} \text{ and } q\mid n \right\}
\end{equation*}

\item With a finite set $X$ we note $\mathcal{P}(X)$ the set composed of the $2^{\card{X}}$ subsets of $X$.

\end{itemize}

\subsection{Description of the problem}

\paragraph{}
The objective of this section is to give a short introduction\footnote{This will be formalized in the following sections. }, with some examples, of the problem that we are trying to resolve. The integer $n$ is considered as being square-free and most of the time it will be the primorial of a given prime $p$. When $n$ is not square-free the results presented here may be extended with no difficulty\footnote{If $Q$ is is the largest square-free divisor of $n$, $\ZZ{n}$ can be seen as $\frac{n}{Q}$ repeated occurrences of $\ZZ{Q}$}.

\paragraph{Representation} In this paper, we work in $\ZZ{n}$ and we represent its elements counterclockwise on a circle of circumference $n$. Let $p=5$ and $P=5\#=30$ then
\begin{equation*}
U(P)=\left\{ 1,7,11,13,17,19,23,29 \right\}
\end{equation*}
We represent the elements of $U(P)$ on a circle modulo $P$ as in figure \ref{fig:u30}. We should use this example in order to illustrate the definitions to follow regarding \emph{distance} and \emph{consecutiveness}.

\begin{figure}[!h]
\centering
\includegraphics[scale=0.5]{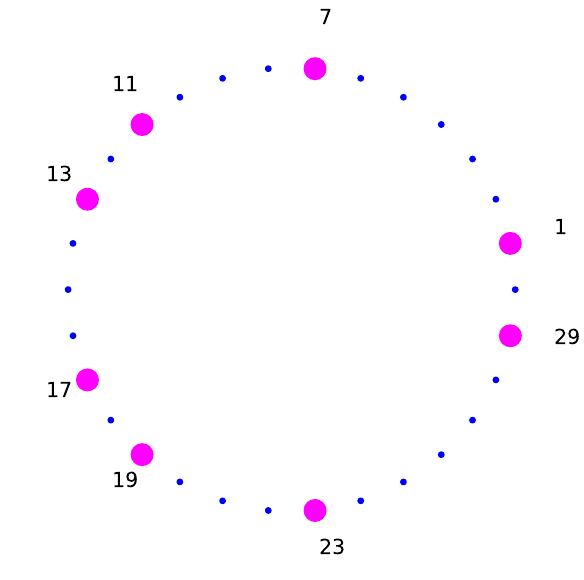}
\caption{A representation of $U(30)$}
\label{fig:u30}
\end{figure}

\paragraph{Distance on $\ZZ{n}$}
The shortest distance between two points on the circle, either clockwise or counterclockwise, noted $d$, is a distance\footnote{The proof either geometric or algebraic yields no difficulty} on $\ZZ{n}$. With the example of figure \ref{fig:u30}, we have $d(1,7)=6$ and $d(1,29)=2$.

\paragraph{Consecutiveness} For any sets $A$ and $E$ such that $A\subset E\subset \ZZ{n}$ we define a consecutiveness property consistent with the intuition: if we represent the elements of $E$ as points marked on a circle of perimeter $n$ (see for example the pink points in figure \ref{fig:u30} representing $E=U(30)$ in $\ZZ{30}$) then $A$, a subset of $E$, is deemed consecutive in $E$ if an only if there exist an arc on the circle such that:
\begin{itemize}
\item All the elements of $A$ are on that arc
\item $A$ and $E$ coincide exactly on that arc (this arc does not contain an element of $E$ which is not in $A$)
\end{itemize}

\begin{example}\label{ex:cons1} (see figure \ref{fig:u30}) The set $A=\{1,7,11\}$ is consecutive in $E$ because on the arc $\overset{\frown}{1,11}$ the set of elements of $E$ is exactly $A$.
\end{example}
\begin{example}\label{ex:cons2} (see figure \ref{fig:u30}) The set $A=\{1,7,13\}$ is not consecutive in $E$. It is obvious when we look at figure \ref{fig:u30} and here is a way to see this: an arc that contains $A$ will either contain one of these three oriented arcs $\overset{\frown}{1,13}$, $\overset{\frown}{7,1}$ or $\overset{\frown}{13,7}$. If this arc contains $\overset{\frown}{1,13}$ it contains $11$ which is in $E$ but not in $A$. If this arc contains $\overset{\frown}{7,1}$ it contains $11$, $17$, $19$, $23$ and $29$ which are in $E$ but not in $A$. If this arc contains $\overset{\frown}{13,7}$ it contains $17$, $19$, $23$ and $29$ which are in $E$ but not in $A$. Therefore there is no arc satisfying the two conditions (to contain $A$ and no element of $E\setminus A$)
\end{example}

\paragraph{Counting configurations} Now going into the subject matter, with $A\subset E\subset \ZZ{n}$ as above, we try in this paper to answer the following questions:
\begin{enumerate}
	\item \emph{Question 1} How many rotations move $A$ onto another subset included in $E$? We shall write this number $\nu(A,E,n)$. 
	\item \emph{Question 2} How many rotations move $A$ onto another subset included in $E$ that is also consecutive in $E$? We shall write this number $\kappa(A,E,n)$.
\end{enumerate}
For simplicity we can impose $A$ to contain 0 and we shall call this a \emph{configuration}.

\begin{example} (see figure \ref{fig:u30})
\begin{itemize} 
\item Answering question 1 with $A=\{0,2\}$ is being able to count the number of pairs, 2 apart, in $E$. there are 3 such pairs: $\{11,13\}$, $\{17,19\}$ and $\{29,1\}$ therefore $\nu(A,E,n)=3$.
\item Answering question 1 with $A=\{0,2,4\}$ is trying to find 3 elements in $E$ such that there exists a rotation that moves them onto $A$. One finds no suitable rotation which means that $\nu(A,E,n)=0$.
\item Answering question 2 with $A=\{0,6\}$ is trying to find pairs, 6 apart and consecutive. There are 2 such pairs $\{1,7\}$ and $\{23,29\}$. The other 4 pairs that are 6 apart are not consecutive. That means $\kappa(A,E,n)=2$ and $\nu(A,E,n)=6$
\end{itemize}
\end{example}

\subsection{Consecutiveness in a subset of $\ZZ{n}$}

\begin{definition}[Set of $E$-consecutive elements]\label{def:cons_set} Let $n$ be an integer, $A$ and $E$ two sets such that $A\subset E \subset \ZZ{n}$. We say that $A$ is a set of $E$-consecutive elements or that A is $E$-consecutive if and only if there exist two integers $x$ and $y$, $x\leq y$ such that
\begin{equation}\label{eq:consecutiveness}
A=E\bigcap \left( {[x;y[}/n\mathbb{Z} \right)
\end{equation}
In other words $A$ is exactly the intersection of $E$ and at least one interval of $\mathbb{Z}$ taken modulo $n$. When there is no ambiguity regarding the subset $E$ that we consider we shall say that $A$ is a set of consecutive elements or even that $A$ is a consecutive set. 
\end{definition}

See for instance examples \ref{ex:cons1} and \ref{ex:cons2}. In the first example take for instance $x=1$ and $y=12$. Regarding the second example there exists no suitable combination of $x$ and $y$. 

\begin{definition}[consecutive elements]\label{def:cons_elements} Let $E$ be a subset of $\ZZ{n}$. We say that two distinct elements $x$ and $y$ in $E$ are consecutive if and only if $\left\{x,y\right\}$ is an $E$-consecutive set.
\end{definition}

\begin{definition}[gap]\label{def:gap} Let $E$ be a subset of $\ZZ{n}$. When $x$ and $y$ in $E$ are consecutive we say that they define a gap of length $d(x,y)$.
\end{definition}

\begin{example} (see figure \ref{fig:u30}) 1 and 7 are consecutive whereas 1 and 11 are not.
\end{example}

%
\section{General configuration counting functions}

%
\subsection{Configuration counting functions}

In this section $n$ is an integer greater than 2 and $E$ a subset of $\ZZ{P}$. The definitions to follow are most of the time dependent on the choice of $n$ and $E$ or $n$ alone. We give the definitions mentioning this explicit dependency but, in order to simplify equations, we may not always write these arguments when the context is clear, i.e. the choice of $n$ and $E$ is clear.

\begin{definition}[Configuration $\mathcal{T}$]\label{def:configuration} We define a configuration $\mathcal{T}$ as any subset of $\ZZ{n}$ that contains 0.
\end{definition}

\begin{definition}[Configuration induced by $F$ and $x$, $\mathcal{T}(F,x)$]\label{def:configuration_induced} Let $F$ be a subset of  $\ZZ{n}$, any $x$ in $F$ defines a configuration $\mathcal{T}(F,x)$ with the subset $\mathcal{T}(F,x):=F - \left\{x\right\}$. We say that the the configuration $\mathcal{T}(F,x)$ is the configuration induced by $F$ and $x$.  
\end{definition}

\begin{example} With $n=30$ and $F=\{1,7,17\}$ we can create three configurations $\{0,6,16\}$, $\{0,10,24\}$ and $\{0,14,20\}$ by setting $x$ to respectively 1, 7 and 17.
\end{example}

\begin{definition}[Length of a configuration $L(\mathcal{T},n)$]\label{def:configuration_length} Let $n$ be an integer and $\mathcal{T}$ a  configuration of $\ZZ{n}$. We define $L(\mathcal{T},n)$ the length of the  configuration $\mathcal{T}$ as 
\begin{equation}\label{eq:configuration_length}
L(\mathcal{T},n):=\max_{t \in \mathcal{T}} r(t)
\end{equation}
With $r$ as defined in section \ref{sec:notations}. We may write $L(\mathcal{T})$ when the context is clear.
\end{definition}

\begin{definition}[Configuration core $\mathcal{C}(\mathcal{T},E,n)$ and its cardinal $\nu(\mathcal{T},E,n)$]\label{def:configuration_core} Let $n$ be an integer, $\mathcal{T}$ a configuration of $\ZZ{n}$ and $E$ a subset of $\ZZ{n}$. We define $\mathcal{C}(\mathcal{T},E,n)$, the core of the configuration $\mathcal{T}$, as the subset  of $\ZZ{n}$ composed of all the elements $x$ such that for any $t$ in $\mathcal{T}$, $x+t$ is in $E$.
\begin{equation}\label{eq:C}
\mathcal{C}(\mathcal{T},E,n):=\left\{x \in \ZZ{n} \;\mid\; \left\{x\right\}+\mathcal{T} \subset E\right\}
\end{equation}
We also define $\nu(\mathcal{T},E,n)$ as the cardinal of $\mathcal{C}(\mathcal{T},E,n)$.
\begin{equation}\label{eq:nu}
\nu(\mathcal{T},E,n):=\card{\mathcal{C}(\mathcal{T},E,n)}
\end{equation}
We may write $\mathcal{C}(\mathcal{T})$ and $\nu(\mathcal{T})$ when the context is clear. $\nu(\mathcal{T})$ counts the number of subsets $F$ of $E$ such that there exists $x\in F$ such that $\mathcal{T}=\mathcal{T}(F,x)$. In other words it counts the number of times the configuration $\mathcal{T}$ can be seen in $E$. 
\end{definition}

\begin{example} Let's take the example of figure \ref{fig:u30} with $n=30$. Let's take the configuration $\mathcal{T}=\left\{0,2,6 \right\}$ and $E=U(n)$. then $\mathcal{C}(\mathcal{T})=\left\{11,17\right\}$. $11$ is in $\mathcal{C}(\mathcal{T})$ because $\left\{11,13,17\right\}\subset U(n)$. $17$ is in $\mathcal{C}(\mathcal{T})$ because $\left\{17,19,23\right\}\subset U(n)$
\end{example}

\begin{proposition} Let $\mathcal{T}$ be a configuration and $\mathcal{C}(\mathcal{T})$ its core. We have $\mathcal{C}(\mathcal{T})\subset E$
\end{proposition}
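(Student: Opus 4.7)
The plan is to unfold the definitions and observe that the containment follows in one line from the fact that a configuration, by Definition~\ref{def:configuration}, must contain~$0$.

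More precisely, I would take an arbitrary element $x \in \mathcal{C}(\mathcal{T})$ and show $x \in E$. By the defining equation \eqref{eq:C}, membership of $x$ in $\mathcal{C}(\mathcal{T})$ means $\{x\} + \mathcal{T} \subset E$, that is, for every $t \in \mathcal{T}$ we have $x + t \in E$. Since $\mathcal{T}$ is a configuration, Definition~\ref{def:configuration} forces $0 \in \mathcal{T}$; applying the previous property to the particular choice $t = 0$ gives $x = x + 0 \in E$. As $x$ was arbitrary, this yields $\mathcal{C}(\mathcal{T}) \subset E$.

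I do not anticipate any obstacle: the whole argument is a direct unwinding of Definitions~\ref{def:configuration} and~\ref{def:configuration_core}, and the only non-trivial ingredient used is the convention that $0$ always belongs to a configuration. No dependence on $n$, on the structure of $E$, or on any earlier result is needed, so the proof will consist of the single implication above.
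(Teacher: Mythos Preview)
Your proof is correct and is essentially identical to the paper's own argument: both observe that $0\in\mathcal{T}$ by Definition~\ref{def:configuration} and then apply Definition~\ref{def:configuration_core} with $t=0$ to conclude $x=x+0\in E$ for any $x\in\mathcal{C}(\mathcal{T})$.
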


\begin{proof} Because $\mathcal{T}$ is a configuration, it contains $0$. By definition of $\mathcal{C}(\mathcal{T})$, any $x\in \mathcal{C}(\mathcal{T})$ must in particular satisfy $x+0\in E$
\end{proof}

\begin{proposition}\label{prop:core_inclusion} Let $n$ be an integer, $E$ a subset of $\ZZ{n}$ and $\mathcal{T}_1$ and $\mathcal{T}_2$ two configurations, then
\begin{equation*}
\mathcal{T}_1 \subset \mathcal{T}_2 \quad \Rightarrow \quad \mathcal{C}(\mathcal{T}_2) \subset \mathcal{C}(\mathcal{T}_1)
\end{equation*} 
\end{proposition}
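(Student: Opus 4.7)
The plan is to prove this inclusion by a direct element-chasing argument based solely on the definition of the core given in equation \eqref{eq:C}. I would pick an arbitrary $x \in \mathcal{C}(\mathcal{T}_2)$ and show it belongs to $\mathcal{C}(\mathcal{T}_1)$.

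By Definition \ref{def:configuration_core}, the assumption $x \in \mathcal{C}(\mathcal{T}_2)$ unfolds as $\{x\} + \mathcal{T}_2 \subset E$, i.e.\ $x + t \in E$ for every $t \in \mathcal{T}_2$. The hypothesis $\mathcal{T}_1 \subset \mathcal{T}_2$ then immediately implies that the same condition holds for every $t \in \mathcal{T}_1$, giving $\{x\} + \mathcal{T}_1 \subset E$, which is exactly the statement $x \in \mathcal{C}(\mathcal{T}_1)$. Since $x$ was arbitrary, $\mathcal{C}(\mathcal{T}_2) \subset \mathcal{C}(\mathcal{T}_1)$.

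There is no real obstacle here: the result is the formal expression of the intuition that demanding more constraints (a larger configuration must fit) can only shrink the set of admissible centers. The proof is essentially one line once the definition of $\mathcal{C}$ is spelled out, and no case distinction, no use of the square-free structure of $n$, and no appeal to the ambient set $\ZZ{n}$ beyond its role as the universe for $E$ is needed. The only thing to be careful about is to state clearly that the monotonicity follows from the universally quantified condition ``for all $t$'' in \eqref{eq:C}, so that a smaller $\mathcal{T}_1$ induces a weaker constraint and hence a larger (or equal) core.
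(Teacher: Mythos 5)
Your argument is correct and is essentially identical to the paper's own proof: both take an arbitrary $x \in \mathcal{C}(\mathcal{T}_2)$, unfold the definition to get $x+t \in E$ for all $t \in \mathcal{T}_2$, and restrict the universal quantifier to $\mathcal{T}_1 \subset \mathcal{T}_2$ to conclude $x \in \mathcal{C}(\mathcal{T}_1)$. Nothing further is needed.
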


\begin{proof} Let $x\in \mathcal{C}(\mathcal{T}_2)$, then
\begin{equation*}
\forall t \in \mathcal{T}_2 \quad x+t\in E
\end{equation*}
Because $\mathcal{T}_1 \subset \mathcal{T}_2$ the above implies
\begin{equation*}
\forall t \in \mathcal{T}_1 \quad x+t\in E
\end{equation*}
which shows that $x\in \mathcal{C}(\mathcal{T}_1)$
\end{proof}

\begin{definition}[Configuration consecutive core $\mathcal{K}(\mathcal{T},E,n)$ and its cardinal $\kappa(\mathcal{T},E,n)$]\label{def:configuration_consecutive_core} Let $n$ be an integer, $E$ a subset of $\ZZ{n}$, $\mathcal{T}$ a configuration of $\ZZ{n}$. We define $\mathcal{K}(\mathcal{T},E,n)$, the consecutive core of the configuration $\mathcal{T}$, as the subset of $\ZZ{n}$ composed of all the elements $x$ such that $\left\{x\right\}+\mathcal{T}$ is $E$-consecutive. 
\begin{equation}\label{eq:K}
\mathcal{K}(\mathcal{T},E,n):=\left\{x \in \ZZ{n} \;\mid\; \left\{x\right\}+\mathcal{T} \;is\; E-consecutive \right\}
\end{equation}
We define $\kappa(\mathcal{T},E,n)$ as the cardinal of $\mathcal{K}(\mathcal{T},E,n)$.
\begin{equation}\label{eq:kappa}
\kappa(\mathcal{T},E,n):=\card{\mathcal{K}(\mathcal{T},E,n)}
\end{equation}
We may write $\mathcal{K}(\mathcal{T})$ or $\kappa(\mathcal{T})$ when the context is clear. $\kappa(\mathcal{T})$ counts the number of $E$-consecutive subsets $F$ of $E$ such that there exists $x\in F$ such that $\mathcal{T}=\mathcal{T}(F,x)$.
\end{definition}

\begin{example} Let's take the example of figure \ref{fig:u30} with $n=30$. Let's take the configuration $\mathcal{T}=\left\{0,6 \right\}$ and $E=U(n)$. then $\mathcal{K}(\mathcal{T})=\left\{1,23\right\}$. Although $7+0$ and $7+6=13$ are both in $U(n)$, 7 is not in $\mathcal{K}(\mathcal{T})$ because $\{7,13\}$ is not $U(30)$-consecutive. The same applies to $11$, $13$ and $17$ which are not in $\mathcal{K}(\mathcal{T})$ because in breach of the consecutiveness condition.
\end{example}

\begin{proposition} Let $\mathcal{T}$ be a configuration, $\mathcal{C}(\mathcal{T})$ its core and $\mathcal{K}(\mathcal{T})$ its consecutive core. We have $\mathcal{K}(\mathcal{T})\subset \mathcal{C}(\mathcal{T})$
\end{proposition}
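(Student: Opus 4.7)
The plan is to unpack both definitions and show that being $E$-consecutive already forces inclusion in $E$, so the consecutive core is automatically contained in the core.

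More precisely, I would start by taking an arbitrary $x \in \mathcal{K}(\mathcal{T})$. By Definition \ref{def:configuration_consecutive_core}, this means that the translated set $A := \{x\} + \mathcal{T}$ is $E$-consecutive. Then I appeal to Definition \ref{def:cons_set}: there exist integers $x' \leq y'$ such that
\begin{equation*}
A = E \cap \bigl({[x';y'[}/n\mathbb{Z}\bigr).
\end{equation*}
The crucial observation is that the right-hand side is written as an intersection with $E$, so in particular $A \subset E$. Hence $\{x\} + \mathcal{T} \subset E$, which by Definition \ref{def:configuration_core} is exactly the condition for $x$ to lie in $\mathcal{C}(\mathcal{T})$. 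This gives $\mathcal{K}(\mathcal{T}) \subset \mathcal{C}(\mathcal{T})$.

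There is no real obstacle here: the proof is essentially a one-line chase through the definitions, since $E$-consecutiveness is defined as a restriction to an arc intersected with $E$, which trivially implies being a subset of $E$. The only thing to be slightly careful about is to note explicitly that the intersection $E \cap ({[x';y'[}/n\mathbb{Z})$ is a subset of $E$ by the very definition of set intersection, so that the implication $A \text{ is } E\text{-consecutive} \Rightarrow A \subset E$ is made explicit rather than left as folklore.
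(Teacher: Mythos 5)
Your proof is correct and takes essentially the same route as the paper: the paper's own argument is the one-line observation that an $E$-consecutive set $\{x\}+\mathcal{T}$ is by definition an intersection with $E$ and hence a subset of $E$, so $x$ lies in the core. You merely spell out the definition chase in more detail, which is fine.
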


\begin{proof} If the set $\left\{x\right\}+\mathcal{T}$ is $E$-consecutive it must satisfy $\left\{x\right\}+\mathcal{T}\subset E$
\end{proof}

\begin{definition}[Configuration complement $\Delta(\mathcal{T},n)$]\label{def:configuration_complement} Let $n$ be an integer and $\mathcal{T}$ a configuration of $\ZZ{n}$, we call configuration complement and we write $\Delta(\mathcal{T},P)$, the subset of elements $x$ in $\ZZ{n}$ which satisfy the following two conditions, $x\notin \mathcal{T}$ and $r(x)<L(\mathcal{T})$. we may write $\Delta(\mathcal{T})$ when the context is clear.
\end{definition}

\begin{remark} The configuration complement is not a configuration because it does not contain 0.
\end{remark}

\subsection{General consecutive core cardinal formula}

\begin{theorem}[Consecutive core cardinal formula]\label{thm:cccf} Let $\mathcal{T}$ be a configuration and $\Delta(\mathcal{T})$ its complement. The following equation holds true:
\begin{equation}\label{eq:cccf}
\kappa(\mathcal{T}) = \sum_{X\in \mathcal{P}(\Delta(\mathcal{T}))} (-1)^{\card{X}} \nu(\mathcal{T} \cup X)
\end{equation}
\end{theorem}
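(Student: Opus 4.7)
The formula has the unmistakable shape of an inclusion-exclusion over $\Delta(\mathcal{T})$, so the plan is to first reformulate $\mathcal{K}(\mathcal{T})$ as the set of $x$ satisfying both $\{x\}+\mathcal{T}\subset E$ and $x+\delta\notin E$ for every $\delta \in \Delta(\mathcal{T})$, then invoke classical sieving. The reformulation hinges on identifying the ``natural arc'' $[r(x);\, r(x)+L(\mathcal{T})+1[/n\mathbb{Z}$ as a universal witness of $E$-consecutiveness: it contains $\{x\}+\mathcal{T}$ by the definition of $L(\mathcal{T})$, and by the construction of $\Delta(\mathcal{T})$ its remaining positions are exactly $\{x\}+\Delta(\mathcal{T})$. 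If both conditions hold, this natural arc directly witnesses consecutiveness. Conversely, any arc witnessing consecutiveness must contain the natural arc (it being the shortest arc covering $\{x\}+\mathcal{T}$), so restricting the witness to the natural arc forces $E$ to avoid every point of $\{x\}+\Delta(\mathcal{T})$.

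With this characterization in hand, I would define the ``bad'' sets $B_\delta := \{x \in \mathcal{C}(\mathcal{T}) : x+\delta \in E\}$ for $\delta \in \Delta(\mathcal{T})$, so that $\mathcal{K}(\mathcal{T}) = \mathcal{C}(\mathcal{T}) \setminus \bigcup_{\delta \in \Delta(\mathcal{T})} B_\delta$. Classical inclusion-exclusion then gives $\kappa(\mathcal{T}) = \sum_{X \in \mathcal{P}(\Delta(\mathcal{T}))} (-1)^{\card{X}}\,\card{\bigcap_{\delta \in X} B_\delta}$. The identification $\bigcap_{\delta \in X} B_\delta = \mathcal{C}(\mathcal{T}\cup X)$ is immediate from the definitions, since $x$ lies in this intersection precisely when $\{x\}+(\mathcal{T}\cup X)\subset E$. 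Passing to cardinals substitutes $\nu(\mathcal{T}\cup X)$ and yields the stated formula; the $X=\emptyset$ term correctly contributes $\nu(\mathcal{T})$ since $\mathcal{T}\cup\emptyset=\mathcal{T}$.

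The real work is the characterization step. Since the definition of $E$-consecutiveness permits any arc as witness, one must rule out ``wraparound'' witnesses that cover $\{x\}+\mathcal{T}$ by going the long way around the circle and could, in principle, avoid $\Delta(\mathcal{T})$-positions that actually lie in $E$. The geometric fact required is that the natural arc is the unique shortest arc containing $\{x\}+\mathcal{T}$, which holds whenever the wraparound gap $n-L(\mathcal{T})$ dominates every internal gap of $\mathcal{T}$; this is automatic when $L(\mathcal{T}) \leq n/2$, the implicit regime for the configurations considered in this paper. Once this lemma is settled, the inclusion-exclusion step is purely formal.
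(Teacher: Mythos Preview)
Your inclusion--exclusion strategy is exactly the paper's: set $B_\delta=\mathcal{C}(\mathcal{T}\cup\{\delta\})$, write $\mathcal{K}(\mathcal{T})=\mathcal{C}(\mathcal{T})\setminus\bigcup_{\delta}B_\delta$, sieve, and identify $\bigcap_{\delta\in X}B_\delta=\mathcal{C}(\mathcal{T}\cup X)$. You are in fact more careful than the paper in isolating the ``characterization step'' as the real content---the paper simply asserts that $x+\delta\in E$ with $\delta\in\Delta(\mathcal{T})$ destroys $E$-consecutiveness and moves on.

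That said, your proposed repair of this step does not work. The claim ``any arc witnessing consecutiveness must contain the natural arc, it being the shortest arc covering $\{x\}+\mathcal{T}$'' is false on a circle: a longer arc need not contain a shorter one. Concretely, take $n=10$, $E=\{0,2,4\}$, $\mathcal{T}=\{0,4\}$. Here $L(\mathcal{T})=4\le n/2$, the wraparound gap $n-L(\mathcal{T})=6$ strictly dominates the sole internal gap, and the natural arc $\{0,1,2,3,4\}$ is indeed the unique shortest covering arc; yet the wraparound arc $[4;11[/10\mathbb{Z}=\{4,5,6,7,8,9,0\}$ witnesses that $\{0,4\}$ is $E$-consecutive while dodging the $\Delta$-position $2\in E$. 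In this example $\kappa(\mathcal{T})=1$ but the right-hand side of~\eqref{eq:cccf} equals $\nu(\{0,4\})-\nu(\{0,2,4\})=1-1=0$. What the argument actually needs is not that the natural arc is shortest, but that \emph{every} witnessing arc contains it; equivalently, that the open wraparound region $(x+L(\mathcal{T}),x+n)$ always meets $E$, i.e.\ that $n-L(\mathcal{T})$ exceeds the largest gap of $E$. For $E=U(P)$ and the short configurations used later this is comfortably true, but neither $L(\mathcal{T})\le n/2$ nor ``wraparound gap dominates internal gaps'' suffices---and the paper's own proof shares this unacknowledged lacuna.
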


\begin{proof} $\kappa(\mathcal{T})$ counts the number of $E$-consecutive sets $F$ that contain an $x$ such that $\mathcal{T}$ is the configuration induced by $F$ and $x$. Also, $\nu(\mathcal{T})$ counts the number of sets $F\subset E$ that contain an $x$ such that $\mathcal{T}$ is the configuration induced by $F$ and $x$; irrespective of the consecutiveness condition. The idea of the proof is to say that $\kappa(\mathcal{T})$ must be equal to $\nu(\mathcal{T})$ less the number of times the set $F$ is not $E$-consecutive.

Let $\delta \in \Delta(\mathcal{T})$ and let us consider the configuration $\mathcal{T}\bigcup \left\{ \delta \right\}$. We have $\mathcal{T} \subset \mathcal{T}\bigcup \left\{ \delta \right\}$ and from proposition \ref{prop:core_inclusion} we have $\mathcal{C}(\mathcal{T}\bigcup \left\{ \delta \right\}) \subset \mathcal{C}(\mathcal{T})$. In addition $\mathcal{C}(\mathcal{T}\bigcup \left\{ \delta \right\}) \bigcap \mathcal{K}(\mathcal{T}) = \emptyset$. If $x\in \mathcal{C}(\mathcal{T}\bigcup \left\{ \delta \right\})$ then $x+\delta \in E$. By definition $\delta \notin \mathcal{T}$ and $r(\delta)<L(\mathcal{T})$ which means that $\left\{x\right\}+\mathcal{T}$ is not an $E$-consecutive set because $x+\delta$ breaches the consecutiveness condition on $\left\{x\right\} + \mathcal{T}$. Therefore $x\notin \mathcal{K}(\mathcal{T})$.

This allows us write

\begin{equation*}
\kappa(\mathcal{T}) = \nu(\mathcal{T}) - \card{ \bigcup_{\delta \in \Delta(\mathcal{T})} \mathcal{C}\left(\mathcal{T}\cup \left\{\delta \right\}\right) }
\end{equation*}

To calculate $\card{ \bigcup_{\delta \in \Delta(\mathcal{T})} \mathcal{C}\left(\mathcal{T}\cup \left\{\delta \right\}\right) }$, we use the inclusion-exclusion principle (see \cite{andrica2009basic})
\begin{equation*}
\card{ \bigcup_{\delta \in \Delta(\mathcal{T})} \mathcal{C}\left(\mathcal{T}\cup \left\{\delta \right\}\right) } = \sum_{X\in \mathcal{P}(\Delta(\mathcal{T}))\setminus \emptyset} (-1)^{\card{X}-1} \card{ \bigcap_{\delta \in X} \mathcal{C}\left(\mathcal{T}\cup \left\{\delta \right\}\right) } 
\end{equation*}
Also we have
\begin{equation*}
\bigcap_{\delta \in X} \mathcal{C}\left(\mathcal{T}\cup \left\{\delta \right\}\right) = \mathcal{C}\left(\mathcal{T}\cup X\right)
\end{equation*}
Then we can write
\begin{equation*}
\card{ \bigcap_{\delta \in X} \mathcal{C}\left(\mathcal{T}\cup \left\{\delta \right\}\right) } = \nu\left(\mathcal{T}\cup X\right)
\end{equation*}
Finally, coming back to the expression of $\kappa\left(\mathcal{T}\right)$
\begin{align*}
\kappa(\mathcal{T}) &= \nu(\mathcal{T}) - \sum_{X\in \mathcal{P}(\Delta(\mathcal{T}))\setminus \emptyset} (-1)^{\card{X}-1} \nu\left(\mathcal{T}\cup X\right) \\
&= \nu(\mathcal{T}) + \sum_{X\in \mathcal{P}(\Delta(\mathcal{T}))\setminus \emptyset} (-1)^{\card{X}} \nu\left(\mathcal{T}\cup X\right) \\
&= \sum_{X\in \mathcal{P}(\Delta(\mathcal{T}))} (-1)^{\card{X}} \nu\left(\mathcal{T}\cup X\right)
\end{align*}

\end{proof}

\begin{example} With $E:=U(30)$ as a subset of $\ZZ{30}$ as in the example of figure \ref{fig:u30}. Let $\mathcal{T}$ be the configuration $\left\{0,6\right\}$ then $\nu(\mathcal{T})=6$ because there are only 6 elements $x$ of $\ZZ{30}$ such that $x+0$ and $x+6$ are both in $E$, namely $1$, $7$, $11$, $13$, $17$ and $23$. However, $\kappa(\mathcal{T})=2$ because one should exclude the following 4 non $U(30)$-consecutive sets $\left\{7, 13\right\}$, $\left\{11, 17\right\}$, $\left\{13, 19\right\}$ and $\left\{17, 23\right\}$. There are only two $U(30)$-consecutive sets with the configuration $\mathcal{T}$ which are $\left\{1, 7\right\}$ and $\left\{23, 29\right\}$.
\end{example}

\section{A formula when $E$ is $U(P)$ for a square-free integer $P$}

In this section we consider a square-free integer $P$. We consider $\ZZ{P}$ and its subset $E:=U(P)$. Let $\mathcal{T}$ be a configuration of $\ZZ{P}$.

\begin{definition}[Configuration modulo a divisor]\label{def:configuration_modulo_divisor} Let $P$ be an integer and $\mathcal{T}$ be a configuration of $\ZZ{P}$. For any integer $q$ that divides $P$, and for any $t$ an integer modulo $P$ of $\mathcal{T}$ we define $t \mod q$ as being $r(t) \mod q$ since this number modulo $q$ is independent from the representative of $t$ that we choose.
\begin{equation*}
\forall k \in \mathbb{Z} \quad r(t) + kP \equiv r(t) \mod q
\end{equation*} 
For this reason (and only because $q$ divides $P$) we can define $\mathcal{T}/q\mathbb{Z}$ as if we were working with integers (see introduction). We define
\begin{equation*}
\mathcal{T}/q\mathbb{Z} := \left\{ r(t)\mod q \quad \mid \quad t \in \mathcal{T} \right\}
\end{equation*}
Then $\mathcal{T}/q\mathbb{Z}$ is a configuration of $\ZZ{q}$ since it is a subset of $\ZZ{q}$ that contains 0.
\end{definition}

\subsection{A formula for $\nu(\mathcal{T},U(P),P)$}

\begin{theorem}\label{thm:nu_T_UP_P} Let $P$ be a square-free integer. We consider $\ZZ{P}$ and its subset $E:=U(P)$. Let $\mathcal{T}$ be a configuration of $\ZZ{P}$.
\begin{equation}\label{eq:nu_T_UP_P}
\nu(\mathcal{T}) = \prod_{p \in \mathcal{S}(P)}\left(p-\card{ \mathcal{T}/p\mathbb{Z} } \right)
\end{equation}
\end{theorem}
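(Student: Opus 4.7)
The plan is to reduce membership in $\mathcal{C}(\mathcal{T})$ to independent congruence conditions modulo each prime divisor of $P$, and then invoke the Chinese Remainder Theorem to count solutions. The key observation is that because $P$ is square-free, $U(P)$ can be characterised purely by nonvanishing modulo each prime factor.

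First I would rewrite the condition defining $U(P)$: since $P=\prod_{p\in\mathcal{S}(P)} p$ with distinct primes, an element $y\in\mathbb{Z}/P\mathbb{Z}$ lies in $U(P)$ if and only if $y\not\equiv 0 \pmod{p}$ for every $p\in\mathcal{S}(P)$. Applying this to each $x+t$ with $t\in\mathcal{T}$, the membership $x\in\mathcal{C}(\mathcal{T})$ becomes
\begin{equation*}
\forall p\in\mathcal{S}(P),\ \forall t\in\mathcal{T}, \quad x\not\equiv -t \pmod{p}.
\end{equation*}
For a fixed prime $p\in\mathcal{S}(P)$, the set of forbidden residues for $x\bmod p$ is $\{-t\bmod p : t\in\mathcal{T}\}$. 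Since $u\mapsto -u$ is a bijection of $\mathbb{Z}/p\mathbb{Z}$, this forbidden set has the same cardinality as $\mathcal{T}/p\mathbb{Z}$, namely $|\mathcal{T}/p\mathbb{Z}|$. Hence the number of admissible residues for $x\bmod p$ is exactly $p-|\mathcal{T}/p\mathbb{Z}|$.

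Next I would invoke the Chinese Remainder Theorem, which (because $P$ is square-free) yields a ring isomorphism $\mathbb{Z}/P\mathbb{Z}\cong\prod_{p\in\mathcal{S}(P)}\mathbb{Z}/p\mathbb{Z}$. Under this isomorphism, the condition on $x$ splits as a conjunction of independent conditions on the components $x\bmod p$. Counting solutions coordinate-wise and multiplying yields
\begin{equation*}
\nu(\mathcal{T}) = |\mathcal{C}(\mathcal{T})| = \prod_{p\in\mathcal{S}(P)}\bigl(p-|\mathcal{T}/p\mathbb{Z}|\bigr),
\end{equation*}
which is the claimed formula.

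There is essentially no hard step here — the main thing to be careful about is the bookkeeping around the negation bijection (so that the size of the forbidden set equals $|\mathcal{T}/p\mathbb{Z}|$ rather than $|\mathcal{T}|$, since distinct $t$'s can collapse modulo $p$), and clearly invoking the square-free hypothesis to justify both the characterisation of $U(P)$ by nonvanishing mod each prime and the application of CRT to pairwise coprime moduli.
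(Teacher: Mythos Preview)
Your proof is correct and follows essentially the same route as the paper: reduce the condition $\{x\}+\mathcal{T}\subset U(P)$ to the requirement $x\not\equiv -t\pmod p$ for each prime $p\mid P$, count the $p-\card{\mathcal{T}/p\mathbb{Z}}$ admissible residues at each prime, and multiply via the Chinese Remainder Theorem. Your remark about the negation bijection is the same bookkeeping the paper handles when it writes $\nu(\mathcal{T}/q\mathbb{Z},U(q),q)=q-\card{\mathcal{T}/q\mathbb{Z}}$.
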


\begin{proof} let $q$ be a prime \footnote{We write this prime $q$ and not $p$ to avoid confusions with $P$} that divides $P$. From \ref{def:configuration_modulo_divisor} it is possible to define $\mathcal{T}/q\mathbb{Z}$ a configuration of $\ZZ{q}$. Now according to the definition \ref{def:configuration_core} for $\mathcal{T}/q\mathbb{Z}$ in $\ZZ{q}$ with the subset $E=U(q)$ we have
\begin{align*}
\nu(\mathcal{T}/q\mathbb{Z},U(q),q) &= \card{ \left\{ x \in \ZZ{q} \; \mid \; \left\{x\right\} + \mathcal{T}/q\mathbb{Z} \subset U(q) \right\} } \\
&= \card{ \left\{ x \in \ZZ{q} \; \mid \; \forall t \in \mathcal{T}/q\mathbb{Z} \quad x\neq -t \right\} } \\
&= q - \card{ \mathcal{T}/q\mathbb{Z} } 
\end{align*}
Let's write
\begin{equation*}
X(q)=\left\{ x \in \ZZ{q} \; \mid \; \left\{x\right\} + \mathcal{T}/q\mathbb{Z} \subset U(q) \right\}
\end{equation*}

Referring to the Chinese remainder theorem (see \cite{andrica2009basic}), knowing an integer $(x \mod q)$ for all its components $q$ (all primes that divide $P$) is tantamount to knowing $(x \mod P)$. Therefore we have

\begin{equation*}
\left(x \mod P\right) \in X(P) \quad \Leftrightarrow \quad \forall q \in \mathcal{S}(P) \quad \left(x \mod q\right) \in X(q)
\end{equation*}
And therefore
\begin{align*}
\nu(\mathcal{T},U(P),P)&=\card{X(P)} \\
&= \prod_{q\in \mathcal{S}(P)}\card{X(q)} \\
&= \prod_{q\in \mathcal{S}(P)}\nu(\mathcal{T},U(q),q) \\
&= \prod_{q\in \mathcal{S}(P)} \left( q - \card{ \mathcal{T}/q\mathbb{Z} } \right) 
\end{align*}

\end{proof}

%
\subsubsection{A generalization of Euler and Nagell's totient functions}\label{sec:generalized_totient}

%
\paragraph{Euler $\phi$ function}

Euler $\phi$ function is linked to $\nu$ via

\begin{equation}\label{eq:phi_nu}
\phi(n)=\nu(\left\{0\right\},U(P),P)
\end{equation}

%
\paragraph{Nagell's totient function}

Nagell's totient function $\theta(n,P)$ counts the number of solutions of the congruence
\begin{equation*}
n\equiv x+y \mod P
\end{equation*}
under the restriction $(x,P)=(y,P)=1$ (see \cite{haukkanen1998nagell} and \cite{cohen1960nagell})

\begin{proposition}\label{prop:nu_and_Nagell}
Nagell's totient function can be expressed via $\nu$ through the following identity
\begin{equation}\label{eq:theta_nu}
\theta(n,P)=\nu(\left\{0,n\right\},U(P),P)
\end{equation}
\end{proposition}

\begin{proof} The expression $\nu(\left\{0,n\right\},U(P),P)$ counts the number of numbers $u$ in $\ZZ{P}$ such that $u+0$ and $u+n$ are both in $U(P)$. For $u$ in $\mathcal{C}(\left\{0,n\right\},U(P),P)$, we write $v=u+n$. We have $v-u=n$. Say $v=x$ and $-u=y$ and we get to $x+y=n \mod P$ and the restriction conditions are satisfied because $x$ is in $U(P)$ and is therefore coprime to $P$. And $u$ in $U(P)$ implies $y=-u$ is in $U(P)$ and therefore $(y,P)=1$. Conversly the equation $x + y \equiv n \mod P$ can be rewritten
\begin{equation*}
x - (- y) \equiv n \mod P
\end{equation*}
Because of the property $u\in U(P)\;\Leftrightarrow \; -u\in U(P)$ the above equation is simply 
\begin{equation*}
u - v \equiv n \mod P
\end{equation*}
with $x=u$ and $v=-y$

%
\paragraph{A generalized totient function}

With $\mathcal{T}$ being a configuration of $\ZZ{P}$\footnote{Although it does not contain 0, we say that $\emptyset$ is a configuration} for an integer $P$ we have seen that

\begin{itemize}
\item $P$ when $\mathcal{T}$ is $\emptyset$
\item $\phi(P)$ when $\mathcal{T}$ is $\left\{0\right\}$
\item $\theta(n,P)$ when $\mathcal{T}$ is $\left\{0,n\right\}$
\end{itemize}

For any other $\mathcal{T}$ we can consider $\nu(\mathcal{T},U(P),P)$ as some sort of generalized totient function. It seems to be a generalization of another kind than those exposed in \cite{tarnauceanu2013generalization}

\end{proof}


\subsection{Consecutive core formula when $E=U(P)$}

\begin{theorem}[main formula in $U(P)$]\label{thm:main_formula} Let $P$ be a square-free integer and $\mathcal{T}$ a configuration of $\ZZ{P}$
\begin{equation}\label{eq:main_formula}
\kappa(\mathcal{T},U(P),P) = \sum_{X\in \mathcal{P}(\Delta(\mathcal{T}))} (-1)^{\card{X}} \prod_{p\in \mathcal{S}(P)}\left(p-\card{ \left(\mathcal{T}\cup X\right)/p\mathbb{Z} } \right)
\end{equation}
\end{theorem}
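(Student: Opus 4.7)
The plan is to obtain the statement by direct substitution: Theorem \ref{thm:cccf} already expresses $\kappa(\mathcal{T})$ as a signed sum over the power set of $\Delta(\mathcal{T})$ of the quantities $\nu(\mathcal{T}\cup X)$, and Theorem \ref{thm:nu_T_UP_P} gives, in the special case $E=U(P)$ with $P$ square-free, a closed product formula for each such $\nu$. So the proof will consist of invoking the first theorem, then replacing each $\nu(\mathcal{T}\cup X)$ by the product over $p\in \mathcal{S}(P)$ supplied by the second theorem.

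First I would note that the hypotheses of Theorem \ref{thm:cccf} are fulfilled: $\mathcal{T}$ is a configuration of $\ZZ{P}$ and $E=U(P)$ is a subset of $\ZZ{P}$, so the theorem applies verbatim and yields
\begin{equation*}
\kappa(\mathcal{T},U(P),P) = \sum_{X\in \mathcal{P}(\Delta(\mathcal{T}))} (-1)^{\card{X}}\, \nu(\mathcal{T}\cup X,\,U(P),\,P).
\end{equation*}
The small point to check here is that for each $X\in \mathcal{P}(\Delta(\mathcal{T}))$ the union $\mathcal{T}\cup X$ is itself a configuration of $\ZZ{P}$, which is immediate because $0\in\mathcal{T}\subset\mathcal{T}\cup X$ and $\mathcal{T}\cup X\subset\ZZ{P}$.

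Next I would apply Theorem \ref{thm:nu_T_UP_P} term by term. Since $P$ is square-free and $\mathcal{T}\cup X$ is a configuration of $\ZZ{P}$, Definition \ref{def:configuration_modulo_divisor} lets us form $(\mathcal{T}\cup X)/p\mathbb{Z}$ for every $p\in\mathcal{S}(P)$, and the theorem gives
\begin{equation*}
\nu(\mathcal{T}\cup X,\,U(P),\,P) = \prod_{p\in\mathcal{S}(P)}\left(p - \card{(\mathcal{T}\cup X)/p\mathbb{Z}}\right).
\end{equation*}
Substituting this into the previous display yields exactly equation \eqref{eq:main_formula}.

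Because both ingredients are already established, there is essentially no obstacle; the only thing worth spelling out is the uniform applicability of Theorem \ref{thm:nu_T_UP_P} to every $\mathcal{T}\cup X$ appearing in the sum, which is guaranteed by the observations above. The statement is therefore just the composition of the two previous theorems with no further combinatorial work required.
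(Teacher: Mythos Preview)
Your proof is correct and follows exactly the same route as the paper: invoke Theorem~\ref{thm:cccf} to express $\kappa(\mathcal{T})$ as an alternating sum of $\nu(\mathcal{T}\cup X)$, then substitute the product formula of Theorem~\ref{thm:nu_T_UP_P} for each term. The paper's own proof is in fact the single sentence ``Application of theorems \ref{thm:cccf} and \ref{thm:nu_T_UP_P}'', so your version is, if anything, more explicit in checking that each $\mathcal{T}\cup X$ is again a configuration.
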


\begin{proof} Application of theorems \ref{thm:cccf} and \ref{thm:nu_T_UP_P}.
\end{proof}

\subsubsection{Another expression of formula \ref{eq:main_formula}}

In this section we rewrite \ref{eq:main_formula} using the following lemma:

\begin{lemma}\label{lemma:1} Let $a$ be a positive integer and $S_a = \left\{0,2,4,\ldots ,2a-2,2a\right\}$ the set of all non negative even numbers less than or equal to $2a$. For any prime $p$ such that $p>a$ and any set $Y\subset S_a$ we have:
\begin{equation*}
\card{ Y/p\mathbb{Z} }=\card{Y}
\end{equation*}
\end{lemma}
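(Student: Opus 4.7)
The plan is to argue that the reduction-mod-$p$ map is injective on $Y$, so that the image $Y/p\mathbb{Z}$ automatically has the same cardinality as $Y$.

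To do this, I would take two distinct elements $2i, 2j \in Y$ with $0 \le i < j \le a$, assume for contradiction that $2i \equiv 2j \pmod{p}$, and rewrite this congruence as $p \mid 2(j-i)$. The hypothesis $p > a$ gives $1 \le j - i \le a < p$, so $p \nmid (j-i)$. Provided $p$ is odd, $\gcd(p,2)=1$ and hence $p \nmid 2(j-i)$, a contradiction that forces the reduction to be injective on $Y$ and yields $\card{Y/p\mathbb{Z}} = \card{Y}$.

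The only delicate point worth flagging is the case $p=2$, which is not formally excluded by the hypothesis $p > a$ when $a = 1$ and for which the conclusion fails, since every element of $S_a$ reduces to $0$ modulo $2$. In the applications of this lemma the relevant primes are the odd primes dividing a primorial, so this edge case is harmless; I would simply record the implicit restriction $p \neq 2$ at the start of the proof. Beyond that caveat, the argument is essentially a one-line invocation of Euclid's lemma and presents no real obstacle.
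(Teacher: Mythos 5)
Your proposal is correct and follows essentially the same route as the paper: both prove that reduction modulo $p$ is injective on $S_a$ by combining the evenness of the difference of two elements with the bound $p>a$ (the paper phrases it as $\left|x-y\right|=kp$ with $k=1$ excluded by parity, hence $\left|x-y\right|\geq 2p>2a$). Your caveat about $p=2$ is well taken and applies equally to the paper's own argument, whose exclusion of $k=1$ silently assumes $p$ is odd; in the intended applications this case is indeed harmless.
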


\begin{proof} It is enough to show that the application modulo $p$ from $S_a$ to $S_a/p\mathbb{Z}$ is injective. If $x=y \mod p$ then if $x\neq y$ we must have $\left|x-y\right|= kp$ with $k\geq 2$ because $x-y$ is even. (in particular $k=1$ is excluded). That means $\left|x-y\right|\geq 2p>2a$ which contradicts $\left|x-y\right|\leq 2a$ because $x$ and $y$ are in $S_a$.
\end{proof}

\begin{proposition}\label{prop:odd} With $P=p\#$ for some prime $p$ and $\mathcal{T}$ a configuration of $\ZZ{P}$, if $\mathcal{T}$ contains an element whose representative is odd then $\nu(\mathcal{T},U(P),P)=0$
\end{proposition}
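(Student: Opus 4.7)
The plan is to apply Theorem \ref{thm:nu_T_UP_P} and isolate the factor corresponding to the prime $2$ in the product. Since $P = p\#$ is a primorial, and $p \geq 2$, we have $2 \in \mathcal{S}(P)$, so the product
\[
\nu(\mathcal{T},U(P),P) = \prod_{q \in \mathcal{S}(P)} \left( q - \card{\mathcal{T}/q\mathbb{Z}} \right)
\]
contains the factor $\left( 2 - \card{\mathcal{T}/2\mathbb{Z}} \right)$. To conclude, it is enough to show that this factor vanishes, i.e., that $\card{\mathcal{T}/2\mathbb{Z}} = 2$.

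First, because $\mathcal{T}$ is a configuration, it contains $0$, so the class $0 \mod 2$ lies in $\mathcal{T}/2\mathbb{Z}$. Second, by hypothesis $\mathcal{T}$ contains some element $t$ with $r(t)$ odd; then $t \mod 2$ is the class $1 \mod 2$, which also lies in $\mathcal{T}/2\mathbb{Z}$. Therefore $\mathcal{T}/2\mathbb{Z} = \ZZ{2} = \{0,1\}$, so its cardinality is exactly $2$.

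Substituting back, the factor for $q=2$ in the product is $2 - 2 = 0$, and hence the whole product is $0$, yielding $\nu(\mathcal{T},U(P),P) = 0$ as claimed. There is no real obstacle here; the statement is an immediate corollary of Theorem \ref{thm:nu_T_UP_P} once one observes that the primorial structure guarantees $2 \in \mathcal{S}(P)$ and that the presence of both an even representative (namely $0$) and an odd representative in $\mathcal{T}$ saturates the residues modulo $2$.
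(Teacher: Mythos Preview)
Your proof is correct and follows essentially the same approach as the paper: both apply Theorem~\ref{thm:nu_T_UP_P}, observe that $2 \in \mathcal{S}(P)$ since $P=p\#$, and show $\mathcal{T}/2\mathbb{Z}=\{0,1\}$ because $0\in\mathcal{T}$ supplies the even class and the hypothesized odd representative supplies the other, so the factor $2-\card{\mathcal{T}/2\mathbb{Z}}$ vanishes.
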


\begin{proof} It is clear that 2 divides $P$. Given that $\mathcal{T}$ contains an element which representative is odd and given definition \ref{def:configuration_modulo_divisor} which applies here we have $\mathcal{T}/2\mathbb{Z}=\left\{0,1\right\}$. From theorem \ref{thm:nu_T_UP_P}, $\nu(\mathcal{T},U(P),P)$ is a multiple of 
\begin{equation*}
2 - \card{\mathcal{T}/2\mathbb{Z}} = 2-2 = 0
\end{equation*}
\end{proof}

\begin{remark}\label{rmk:even} Because of proposition \ref{prop:odd}, when applying theorem \ref{thm:main_formula} it is sufficient to only consider configurations that only contain classes with even representatives. That is because all other terms will contribute to 0 in equation \ref{eq:main_formula} of theorem \ref{thm:main_formula} .
\end{remark}

The configuration $\mathcal{T}$ has an even length. Say $L(\mathcal{T})=D=2a$. The cardinals of the sets that belong to $\mathcal{P}(\Delta(\mathcal{T}))$ range from 0 (for $X=\emptyset$) to $a+1-\card{\mathcal{T}}$ (for $X=\Delta(\mathcal{T})$). If we split the sum over sets that have the same cardinal the equation becomes:

\begin{equation*}
\kappa(\mathcal{T},U(P),P) = \sum_{k=0}^{a+1-\card{\mathcal{T}}}\sum_{\substack{ X\in \mathcal{P}(\Delta(\mathcal{T})) \\\card{X}=k }} (-1)^{\card{X}} \prod_{p\in \mathcal{S}(P)}\left(p-\card{ \left(\mathcal{T}\cup X\right)/p\mathbb{Z} } \right)
\end{equation*}

In addition we transform the product inside the sum into the product of two products, one for primes less than or equal to $a$ and another one for primes strictly greater than $a$.

\begin{equation*}
\prod_{p\in \mathcal{S}(P)} = \prod_{\substack{p\in \mathcal{S}(P)\\p\leq a}} \prod_{\substack{p\in \mathcal{S}(P)\\a<p}}
\end{equation*}

From proposition \ref{prop:odd}, the only terms that are non zero are those where $\mathcal{T}\cup X \subset S_a$ (the representatives in $X$ are all even). When $p>a$ the application of lemma \ref{lemma:1} gives:

\begin{align*}
\card{ \left(\mathcal{T}\cup X\right)/p\mathbb{Z} } &=\card{ \mathcal{T}\cup X } \\
                                                        &= \card{\mathcal{T}}+\card{X} \\
																												&= \card{\mathcal{T}}+k
\end{align*}

The equation becomes:

\begin{multline*}
\kappa(\mathcal{T},U(P),P) = \sum_{k=0}^{a+1-\card{\mathcal{T}}} \sum_{\substack{ X\in \mathcal{P}(\Delta(\mathcal{T})) \\\card{X}=k }} (-1)^{\card{X}} \\ \prod_{\substack{p\in \mathcal{S}(P)\\p\leq a}}\left(p-\card{ \left(\mathcal{T}\cup X\right)/p\mathbb{Z} \right) }  \prod_{\substack{p\in \mathcal{S}(P)\\a<p}}(p-k-\card{\mathcal{T}})
\end{multline*}

The term $\prod_{\substack{p\in \mathcal{S}(P)\\a<p}}(p-k-\card{\mathcal{T}})$ in the inner sum does not depend on $X$. The equation can be rewritten:

\begin{multline*}
\kappa(\mathcal{T},U(P),P) = \sum_{k=0}^{a+1-\card{\mathcal{T}}} \prod_{\substack{p\in \mathcal{S}(P)\\a<p}}(p-k-\card{\mathcal{T}}) \\ \sum_{\substack{ X\in \mathcal{P}(\Delta(\mathcal{T})) \\ \card{X}=k }} (-1)^{\card{X}} \prod_{\substack{p\in \mathcal{S}(P)\\p\leq a}}\left(p-\card{ \left(\mathcal{T}\cup X\right)/p\mathbb{Z} } \right) 
\end{multline*}

Let's write:

\begin{equation}\label{eq:c}
c(a,k,P,\mathcal{T})= \sum_{\substack{ X\in \mathcal{P}(\Delta(\mathcal{T})) \\ \card{X}=k }} (-1)^{\card{X}} \prod_{\substack{p\in \mathcal{S}(P)\\p\leq a}}\left(p-\card{ \left(\mathcal{T}\cup X\right)/p\mathbb{Z} } \right)
\end{equation}

Note that because $L(\mathcal{T})=2a$ it is not necessary to write that $c$ depends on $a$ since it already depends on $\mathcal{T}$. However I prefer to write it this way for readability. The equation becomes:

\begin{equation*}
\kappa(\mathcal{T},U(P),P) = \sum_{k=0}^{a+1-\card{\mathcal{T}}} c(a,k,P,\mathcal{T}) \prod_{\substack{p\in \mathcal{S}(P)\\a<p}}(p-k-\card{\mathcal{T}}) 
\end{equation*}

Which is also

\begin{equation}\label{eq:kappa_rewritten}
\kappa(\mathcal{T},U(P),P) = \sum_{k=\card{\mathcal{T}}}^{a+1} c(a,k-\card{\mathcal{T}},P,\mathcal{T}) \prod_{\substack{p\in \mathcal{S}(P)\\a<p}}(p-k) 
\end{equation}

\subsubsection{Particularization of equation \ref{eq:kappa_rewritten} when $\mathcal{T}=\{0,2a\}$ and $P=p\#$}\label{sec:gaps}

Now, in addition to being square-free, we suppose that $P$ is the primorial of a prime $p$. For a positive integer $a$, when we set the configuration to $\mathcal{T}=\{0,2a\}$. We essentially get formulas to count the number of gaps of length $2a$. The gaps among numbers coprime to a primorial have been studied at length. (see for example \cite{ziller2020differences} (smallest even number which is not a gap), \cite{holt2015combinatorics} (asymptotic population of gaps) or \cite{iwaniec1978problem} (maximum gap))

When $\mathcal{T}=\{0,D\}$ we adopt a simpler notation:

\begin{equation}\label{eq:KDP_definition}
K(D,P) := \kappa(\mathcal{T},U(P),P)
\end{equation}

When $\mathcal{T}=\{0,2a\}$, we have $L(\mathcal{T})=2a$ and $\card{\mathcal{T}}=2$. Because $P=p\#$ the equation \ref{eq:kappa_rewritten} can be rewritten

\begin{equation}\label{eq:kappa_for_pairs}
K(D,P) = \sum_{k=2}^{a+1} c(a,k-2,P,\mathcal{T}) \prod_{\substack{q\in\mathbb{P} \\ a < q\leq p}}(q-k) 
\end{equation}

\section{Formulas to count the number of consecutive pairs in $U(P)$}

\subsection{Direct application of formula \ref{eq:kappa_for_pairs} for $D$ even and $D\leq 50$}\label{sec:KDP}

With a program, according to the equation \ref{eq:kappa_for_pairs} we calculate

\begin{equation}\label{eq:K2P}
K(2,P)=\prod_{\substack{q\in\mathbb{P} \\3\leq q\leq p}}(q-2)
\end{equation}

\begin{equation}\label{eq:K4P}
K(4,P)=\prod_{\substack{q\in\mathbb{P} \\3\leq q\leq p}}(q-2)
\end{equation}

\begin{equation}\label{eq:K6P}
K(6,P)=2\prod_{\substack{q\in\mathbb{P} \\5\leq q\leq p}} (q-2)-2\prod_{\substack{q\in\mathbb{P} \\5\leq q\leq p}} (q-3)
\end{equation}

\begin{equation}\label{eq:K8P}
K(8,P)=\prod_{\substack{q\in\mathbb{P} \\5\leq q\leq p}} (q-2)-2\prod_{\substack{q\in\mathbb{P} \\5\leq q\leq p}} (q-3)\\+\prod_{\substack{q\in\mathbb{P} \\5\leq q\leq p}} (q-4)
\end{equation}

\begin{equation}\label{eq:K10P}
K(10,P)=4\prod_{\substack{q\in\mathbb{P} \\7\leq q\leq p}} (q-2)-6\prod_{\substack{q\in\mathbb{P} \\7\leq q\leq p}} (q-3)\\+2\prod_{\substack{q\in\mathbb{P} \\7\leq q\leq p}} (q-4)
\end{equation}

\begin{equation}\label{eq:K12P}
K(12,P)=6\prod_{\substack{q\in\mathbb{P} \\7\leq q\leq p}} (q-2)-14\prod_{\substack{q\in\mathbb{P} \\7\leq q\leq p}} (q-3)\\+10\prod_{\substack{q\in\mathbb{P} \\7\leq q\leq p}} (q-4)-2\prod_{\substack{q\in\mathbb{P} \\7\leq q\leq p}} (q-5)
\end{equation}

\begin{equation}\label{eq:K14P}
K(14,P)=18\prod_{\substack{q\in\mathbb{P} \\11\leq q\leq p}} (q-2)-40\prod_{\substack{q\in\mathbb{P} \\11\leq q\leq p}} (q-3)\\+28\prod_{\substack{q\in\mathbb{P} \\11\leq q\leq p}} (q-4)-6\prod_{\substack{q\in\mathbb{P} \\11\leq q\leq p}} (q-5)
\end{equation}

We have calculated the formulas for $D$ up to 50, they are available page \pageref{KDP_listings} expressed as listings.

The application of these formulas give the numbers in table \ref{tab1}. These numbers have been checked against an exact calculation of $K(D,p\#)$

\subsection{A property on the sum of the coefficients}

\begin{proposition}\label{prop:sum_coeffs} If we write $S$ the sum of the coefficients $c(a,k-2,P,\mathcal{T})$ (see equation \ref{eq:c}) for $k$ ranging from 2 to $a+1$ as in equation \ref{eq:kappa_for_pairs} and if we denote by $q$ the greatest prime less than or equal to $a$ (or such that $2q\leq D$) we have
\begin{equation}
S := \sum_{k=2}^{a+1} c(a,k-2,P,\mathcal{T}) = K(D,q\#)
\end{equation}
That means $S$ is equal to the number of gaps of length $D$ in $U(q\#)$.
\end{proposition}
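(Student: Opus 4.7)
The plan is to substitute $P=q\#$ directly in equation \ref{eq:kappa_for_pairs} and to observe that (i) the coefficients $c(a,k-2,P,\mathcal{T})$ are unchanged by this substitution, and (ii) the second product in \ref{eq:kappa_for_pairs} becomes an empty product equal to $1$. Combined, these two observations reduce the right-hand side of \ref{eq:kappa_for_pairs} for $K(D,q\#)$ to exactly the sum $S$.

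First I would inspect the definition \ref{eq:c} of $c(a,k-2,P,\mathcal{T})$ and note that it depends on $P$ only through the set $\{p' \in \mathcal{S}(P) : p' \leq a\}$, since the inner product in \ref{eq:c} is indexed by that set and the cardinals $\card{(\mathcal{T}\cup X)/p'\mathbb{Z}}$ only involve primes $p'\leq a$. Because $q$ is the greatest prime at most $a$ and $P=p\#$ with $p \geq q$, the set of primes dividing $P$ that are at most $a$ coincides with the set of primes dividing $q\#$ that are at most $a$, namely all primes $\leq q$. Hence
\begin{equation*}
c(a,k-2,P,\mathcal{T}) = c(a,k-2,q\#,\mathcal{T}) \quad \text{for all } k,
\end{equation*}
so $S$ is unchanged when $P$ is replaced by $q\#$.

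Next I would apply equation \ref{eq:kappa_for_pairs} with $P$ replaced by $q\#$, writing
\begin{equation*}
K(D,q\#) = \sum_{k=2}^{a+1} c(a,k-2,q\#,\mathcal{T}) \prod_{\substack{q' \in \mathbb{P} \\ a < q' \leq q}} (q'-k).
\end{equation*}
Since $q \leq a$ (equivalently $2q \leq D$) the index set $\{q'\in\mathbb{P} : a<q'\leq q\}$ is empty, so the inner product is the empty product, equal to $1$. Combining with the previous step gives
\begin{equation*}
K(D,q\#) = \sum_{k=2}^{a+1} c(a,k-2,q\#,\mathcal{T}) = \sum_{k=2}^{a+1} c(a,k-2,P,\mathcal{T}) = S,
\end{equation*}
which is the claimed identity.

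The proof is essentially bookkeeping; there is no real obstacle beyond correctly tracking how $c$ depends on $P$ and noticing that the "large-prime" product in \ref{eq:kappa_for_pairs} collapses to $1$ precisely when $P = q\#$ with $q \leq a$.
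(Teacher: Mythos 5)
Correct, and in substance the same argument as the paper's: both hinge on the facts that $c(a,k-2,P,\mathcal{T})$ involves only the primes of $\mathcal{S}(P)$ that are at most $a$ (so it is unchanged when $P$ is replaced by $q\#$, given that all primes $\leq a$ divide $P$, i.e.\ $q\leq p$) and that for $P=q\#$ the product over primes exceeding $a$ is empty. The paper packages this by unfolding the definition of $c$, recombining the double sum, and invoking Theorem \ref{thm:main_formula} for $q\#$, whereas you specialize equation \ref{eq:kappa_for_pairs} directly at $P=q\#$; this is a legitimate shortcut, since the derivation of \ref{eq:kappa_for_pairs} is valid for any primorial, the large-prime product then being the empty product equal to $1$.
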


\begin{proof}
\begin{align*}
S &:= \sum_{k=2}^{a+1} c(a,k-2,P,\mathcal{T}) \\
  &= \sum_{k=0}^{a-1} c(a,k,P,\mathcal{T}) \\
  &= \sum_{k=0}^{a-1} \sum_{\substack{ X\in \mathcal{P}(\Delta(\mathcal{T})) \\ \card{X}=k }} (-1)^{\card{X}} \prod_{\substack{p\in \mathbb{P}\\p\leq a}}\left(p-\card{ \left(\mathcal{T}\cup X\right)/p\mathbb{Z} } \right) \\
  &= \sum_{\substack{ X\in \mathcal{P}(\Delta(\mathcal{T}))}} (-1)^{\card{X}} \prod_{\substack{p\in \mathbb{P}\\p\leq a}}\left(p-\card{ \left(\mathcal{T}\cup X\right)/p\mathbb{Z} } \right) \\
	&= K(D,q\#)
\end{align*}
\end{proof}

In the examples that were given ($p\leq 29$) it is interesting to note that this sum is always equal to zero. This property should not hold however for values of $p\geq 43$ as indicated in the sequence $A048670$ from The On-Line Encyclopedia of Integer Sequences  (see \cite{oeis} (the Jacobsthal function applied to primorials). Indeed in $U(43\#)$, the largest gap between consecutive elements is $90$ which is larger than $2*43=86$. (43 is the smallest prime $p$ such that the longest gap in $U(p\#)$ is greater than $2p$)

\section{Number of occurrences of gaps of length $N$ that contain exactly $i$ elements of $U(P)$}

Let $a$ be a strictly positive integer and $N:=2a$ an even integer greater than 2 representing a gap length. In this section we consider the $\nu\left(\left\{0,N\right\}\right)$ pairs of elements of $U(P)$ that are $N$ apart or equivalently the $\nu\left(\left\{0,N\right\}\right)$ gaps of length $N$. For $i$ an integer satisfying $0\leq i < a$, we provide a formula (see equation \ref{eq:formula_MNi} in theorem \ref{thm:formula_MNi}) to count $M(N,i)$ the number of occurrences of gaps of length $N$ that contain exactly $i$ elements of $U(P)$. Alternatively $M(N,i)$ is also the number of gaps of length $N$ that are composed of $i+1$ \emph{consecutive} gaps, a number that is denoted $n_{N,i+1}$ in \cite{holt2015combinatorics}.

Let us introduce the following two definitions

\begin{definition}[$X(N,i)$ and $M(N,i)$] Here, $N$ is a strictly positive even integer (Say $N:=2a$) and $i$ is an integer satisfying $0\leq i < a$. Let $E$ be the set $\left\{2,4,\ldots ,2a-2 \right\}$. We define
\begin{equation}\label{eq:def_X}
X(N,i) := \sum_{\substack{Y\in \parties{E}\\\card{Y}=i}} \nu \left(Y \cup \left\{0,N\right\} \right)
\end{equation}
and
\begin{equation}\label{eq:def_M}
M(N,i) := \sum_{\substack{Y\in \parties{E}\\\card{Y}=i}} \kappa \left(Y \cup \left\{0,N\right\} \right)
\end{equation}
\end{definition}

\begin{lemma}[Relationship between $M$ and $X$]\label{lem:rel_M_X} Let $a>0$ be a positive integer, $N:=2a$ an even integer and $i$ any integer satisfying $0\leq i < a$.
\begin{equation}\label{eq:rel_M_X}
M(N,i) = \sum_{k=0}^{a-1-i} {(-1)}^k \binom{i+k}{k} X(N,i+k)
\end{equation}
\end{lemma}

\begin{proof} $E$ is the set $\left\{2,4,\ldots ,2a-2 \right\}$ and $\card{E}=a-1$. By definition
\begin{equation*}
M(N,i) = \sum_{\substack{Y\in \parties{E}\\\card{Y}=i}} \kappa \left(Y \cup \left\{0,N\right\} \right)
\end{equation*}
Let $i$ be an integer such that $0\leq i<a$ and let $Y$ be a subset of $E$ such that $\card{Y}=i$. From theorem \ref{thm:cccf}
\begin{align*}
\kappa \left(Y \cup \left\{0,N\right\} \right) &= \sum_{Z\in \mathcal{P}(\Delta(Y \cup \left\{0,N\right\}))} (-1)^{\card{Z}} \nu \left( Y \cup \left\{0,N\right\} \cup Z \right) \\
&= \sum_{Z \in \parties{E \setminus Y}} (-1)^{\card{Z}} \nu \left( Y \cup \left\{0,N\right\} \cup Z \right)\\
&= \sum_{u=0}^{a-1-i} \sum_{\substack{Z \in \parties{E \setminus Y} \\ \card{Z}=u}} (-1)^{\card{Z}} \nu \left( Y \cup \left\{0,N\right\} \cup Z \right)
\end{align*}
Therefore:
\begin{align*}
M(N,i) &= \sum_{\substack{Y\in \parties{E}\\\card{Y}=i}} \sum_{u=0}^{a-1-i} \sum_{\substack{Z \in \parties{E \setminus Y} \\ \card{Z}=u}} (-1)^{\card{Z}} \nu \left( Y \cup \left\{0,N\right\} \cup Z \right) \\
&= \sum_{u=0}^{a-1-i} \sum_{\substack{Y\in \parties{E}\\\card{Y}=i}}  \sum_{\substack{Z \in \parties{E \setminus Y} \\ \card{Z}=u}} (-1)^{\card{Z}} \nu \left( Y \cup \left\{0,N\right\} \cup Z \right)\\
&= \sum_{u=0}^{a-1-i} \sum_{\substack{Z \in \parties{E} \\ \card{Z}=u}} \sum_{\substack{Y\in \parties{E \setminus Z}\\\card{Y}=i}}   (-1)^{\card{Z}} \nu \left( Y \cup \left\{0,N\right\} \cup Z \right)\\
&= \sum_{u=0}^{a-1-i} (-1)^u \sum_{\substack{Z \in \parties{E} \; Y \in \parties{E} \; \\ Z\cap Y=\emptyset \\\card{Z}=u \; \card{Y}=i}} \nu \left( Y \cup Z \cup \left\{0,N\right\} \right)\\
&= \sum_{u=0}^{a-1-i} (-1)^u \binom{i+u}{u} \sum_{\substack{T \in \parties{E} \\ \card{T}=i+u}} \nu \left( T \cup \left\{0,N\right\} \right)
\\
&= \sum_{u=0}^{a-1-i} (-1)^u \binom{i+u}{u} X(N,i+u)
\end{align*}
\end{proof}

\begin{theorem}[Formula for $M(N,i)$]\label{thm:formula_MNi} Let $a>0$ be a positive integer, $N:=2a$ an even integer and $i$ any integer satisfying $0\leq i < a$, we have the following expression to calculate $M(N,i)$
\begin{equation}\label{eq:formula_MNi}
M(N,i) = \sum_{k=0}^{a-1-i} {(-1)}^k \binom{i+k}{k} \sum_{\substack{Y\in \parties{E}\\\card{Y}=i+k}} \prod_{p \in \mathcal{S}(P)}\left(p-\card{ {\left( Y\cup \left\{0,N\right\} \right)}/p\mathbb{Z} } \right)
\end{equation}
\end{theorem}

\begin{proof} In equation \ref{eq:rel_M_X} from lemma \ref{lem:rel_M_X}, we replace $X(N,i)$ by its definition (see equation \ref{eq:def_X}). Then, we replace $\nu$ by its expression from formula \ref{eq:nu_T_UP_P} in theorem \ref{thm:nu_T_UP_P}.
\end{proof}

\subsection{A partition of Nagell's totient function}

\begin{proposition} Let $a>0$ be a positive integer and $N:=2a$ an even integer. Let $P:=p\#$ be the primorial of a prime $p$. With $\theta(N,P)$ being Nagell's totient function as defined in \cite{cohen1960nagell} we have the following identity
\begin{equation}\label{eq:sum_M}
\sum_{i=0}^{a-1}M(N,i)=\theta(N,P)
\end{equation}
\end{proposition}

\begin{proof} We transform the sum as follows
\begin{align*}
\sum_{i=0}^{a-1}M(N,i) &= \sum_{i=0}^{a-1} \sum_{k=0}^{a-1-i} {(-1)}^k \binom{i+k}{k} X(N,i+k) \\
&= \sum_{j=0}^{a-1} \sum_{\substack{i,k\geq 0 \\ i+k=j}} {(-1)}^k \binom{j}{k} X(N,j) \\
&= \sum_{j=0}^{a-1} X(N,j) \sum_{k=0}^j {(-1)}^k \binom{j}{k}  \\
&= \sum_{j=0}^{a-1} X(N,j) (1-1)^j \\
&= X(N,0)
\end{align*}
Also, $X(N,0)=\nu(\left\{0,N\right\})$ by definition, and $\nu(\left\{0,N\right\})=\theta(N,P)$ from proposition \ref{prop:nu_and_Nagell}.
\end{proof}

\begin{remark} This result was expected as can be seen in the alternative proof below
\end{remark}

\begin{proof} Let's define the following sets
\begin{equation*}
A(N) = \left\{ u  \; | \; u \in U(P); \; u+N \in U(P) \right\}
\end{equation*}
and
\begin{equation*}
B(N,i) = \left\{ u  \; | \; u \in U(P); \; u+N \in U(P); \; \card{U(P)\cap\left\{u+1,\ldots,u+N-1\right\}}=i \right\}
\end{equation*}
We have
\begin{equation*}
\card{A(N)}=\nu(\left\{0,N\right\})=\theta(N,P)
\end{equation*}
And
\begin{equation*}
\card{B(N,i)}=M(N,i)
\end{equation*}
Because the sets $B(N,i)$ for $i$ from 0 to $a-1$ form a partition of the set $A(N)$ we must have
\begin{equation*}
\sum_{i=0}^{a-1}M(N,i)=\theta(N,P)
\end{equation*}
\end{proof}

\section{Conclusion}

The paper proposes some formulas to calculate the number of occurrences of gap patterns in $U(P)$ based on the inclusion-exclusion principle and the Chinese Remainder Theorem. For single gaps $D=2a$ this happens to provide relatively simple formulas, at least from a theoretical perspective, to count the number of gaps in any $U(p\#)$ for any prime $p$.
Unfortunately it becomes increasingly difficult to verify the formulas given the steep growth of $p\#$. Calculating the coefficients to apply in the formulas is also very challenging since a naive calculation of $K(D,P)$ yields a complexity $2^a$. However once the coefficients are calculated for a certain even gap $D$ the calculation of $K(D,P)$ becomes possible for very large primes $p$. The formulas offer some perspectives to calculate $K(D,P)$ when it becomes impossible from a computation perspective to just count all the gaps in $U(P)$ due to the size of $P$. It may also be interesting to apply the formulas to some specific gap patterns. There are also possibilities to optimize the calculation of $K(D,P)$ which leads to interesting algorithmic questions and would enable the calculation of more of these values.

\appendix

\section{Listings for formula \ref{eq:kappa_for_pairs} for $D$ even and $6\leq D\leq 50$}\label{KDP_listings}

We give below listings for the formulas of $K(D,P)$ (see section \ref{sec:KDP} and equation \ref{eq:kappa_for_pairs}) for $D$ even ranging from 6 to 50. The first value is the smallest prime number $p^\star$ strictly greater than $a=\frac{D}{2}$. The other couples indicate a coefficient $c$ and a value $b$ such that each couple defines the contribution $c\prod_{\substack{q\in\mathbb{P} \\a<p^\star\leq q\leq p}} (q-b)$ in the formula of $K(D,P)$. In particular, the products should be taken on primes $q$ such that $a<p^\star\leq q \leq p$ and therefore the formulas are only valid for $p^\star \leq p$.

\begin{tiny}
\begin{verbatim}
D =  6:  [5, (2, 2), (-2, 3)]
D =  8:  [5, (1, 2), (-2, 3), (1, 4)]
D =  10: [7, (4, 2), (-6, 3), (2, 4)]
D =  12: [7, (6, 2), (-14, 3), (10, 4), (-2, 5)]
D =  14: [11, (18, 2), (-40, 3), (28, 4), (-6, 5)]
D =  16: [11, (15, 2), (-40, 3), (36, 4), (-12, 5), (1, 6)]
D =  18: [11, (30, 2), (-92, 3), (100, 4), (-44, 5), (6, 6)]
D =  20: [11, (20, 2), (-78, 3), (116, 4), (-80, 5), (24, 6), (-2, 7)]
D =  22: [13, (150, 2), (-504, 3), (632, 4), (-350, 5), (72, 6)]
D =  24: [13, (270, 2), (-1088, 3), (1738, 4), (-1376, 5), (540, 6), (-84, 7)]
D =  26: [17, (1620, 2), (-6688, 3), (11090, 4), (-9378, 5), (4224, 6), (-952, 7), (84, 8)]
D =  28: [17, (1782, 2), (-7400, 3), (12312, 4), (-10400, 5), (4634, 6), (-1008, 7), (80, 8)]
D =  30: [17, (3960, 2), (-19312, 3), (38958, 4), (-41768, 5), (25376, 6), (-8570, 7), (1446, 8), (-90, 9)]
D =  32: [17, (1485, 2), (-8128, 3), (18833, 4), (-23992, 5), (18255, 6), (-8428, 7), (2287, 8), (-332, 9), (20, 10)]
D =  34: [19, (23760, 2), (-122400, 3), (265734, 4), (-315120, 5), (220944, 6), (-92466, 7), (22120, 8), (-2700, 9), (128, 10)]
D =  36: [19, (44550, 2), (-248688, 3), (592204, 4), (-783298, 5), (627720, 6), (-311962, 7), (94618, 8), (-16604, 9),
          (1516, 10), (-56, 11)]
D =  38: [23, (400950, 2), (-2239104, 3), (5333232, 4), (-7045200, 5), (5612012, 6), (-2737436, 7), (788592, 8), (-120186, 9),
          (7140, 10)]
D =  40: [23, (504900, 2), (-2915840, 3), (7236810, 4), (-10062640, 5), (8559382, 6), (-4558512, 7), (1490236, 8), 
          (-279200, 9), (25632, 10), (-768, 11)]
D =  42: [23, (908820, 2), (-5777920, 3), (16006998, 4), (-25293628, 5), (25040302, 6), (-16042408, 7), (6621546, 8),
          (-1691666, 9), (243872, 10), (-16210, 11), (294, 12)]
D =  44: [23, (420750, 2), (-2834496, 3), (8400816, 4), (-14384852, 5), (15706264, 6), (-11377586, 7), (5507072, 8), 
          (-1745434, 9), (342926, 10), (-37092, 11), (1632, 12)]
D =  46: [29, (8330850, 2), (-55372800, 3), (161805900, 4), (-272787560, 5), (292530312, 6), (-207276852, 7), (97483328, 8),
          (-29693850, 9), (5502392, 10), (-541736, 11), (20016, 12)]
D =  48: [29, (15904350, 2), (-110218240, 3), (337714368, 4), (-601500212, 5), (688462352, 6), (-528267460, 7), (274911048, 8),
          (-95882496, 9), (21589178, 10), (-2909644, 11), (201728, 12), (-4972, 13)]
D =  50: [29, (10602900, 2), (-78453760, 3), (259208326, 4), (-503845272, 5), (638819850, 6), (-553348174, 7), (333200948, 8),
          (-139061136, 9), (39323050, 10), (-7178822, 11), (771934, 12), (-40444, 13), (600, 14)]
\end{verbatim}
\end{tiny}

\section{Table of values of $K(D,p\#)$}\label{KDP_table}

The values of the table below have been calculated with two different methods
\begin{itemize}
\item Just counting the consecutive pairs in $U(P)$ with a program (reference calculation)
\item Applying formula \ref{eq:kappa_for_pairs} page \pageref{eq:kappa_for_pairs}
\end{itemize}
To support the validity of equation \ref{eq:kappa_for_pairs}; it has always returned the exact number on all calculations from the table.

\begin{table}[!ht]
\centering
\caption{Values of $K(D,p\#)$, $D$ in rows, $p$ in columns}\label{tab1}
\small
\begin{tabular}{|c||c|c|c|c|c|c|c|c|c|c|}
\hline
$K(D,p\#)$ & 2 & 3 & 5 & 7 & 11 & 13 & 17 & 19 & 23 & 29 \\
\hline
\hline
2 & 1 & 1 & 3 & 15 & 135 & 1485 & 22275 & 378675 & 7952175 & 214708725 \\
\hline
4 & 0 & 1 & 3 & 15 & 135 & 1485 & 22275 & 378675 & 7952175 & 214708725 \\
\hline
6 & 0 & 0 & 2 & 14 & 142 & 1690 & 26630 & 470630 & 10169950 & 280323050 \\
\hline
8 & 0 & 0 & 0 & 2 & 28 & 394 & 6812 & 128810 & 2918020 & 83120450 \\
\hline
10 & 0 & 0 & 0 & 2 & 30 & 438 & 7734 & 148530 & 3401790 & 97648950 \\
\hline
12 & 0 & 0 & 0 & 0 & 8 & 188 & 4096 & 90124 & 2255792 & 68713708 \\
\hline
14 & 0 & 0 & 0 & 0 & 2 & 58 & 1406 & 33206 & 871318 & 27403082 \\
\hline
16 & 0 & 0 & 0 & 0 & 0 & 12 & 432 & 12372 & 362376 & 12199404 \\
\hline
18 & 0 & 0 & 0 & 0 & 0 & 8 & 376 & 12424 & 396872 & 14123368 \\
\hline
20 & 0 & 0 & 0 & 0 & 0 & 0 & 24 & 1440 & 61560 & 2594160 \\
\hline
22 & 0 & 0 & 0 & 0 & 0 & 2 & 78 & 2622 & 88614 & 3324402 \\
\hline
24 & 0 & 0 & 0 & 0 & 0 & 0 & 20 & 1136 & 48868 & 2100872 \\
\hline
26 & 0 & 0 & 0 & 0 & 0 & 0 & 2 & 142 & 7682 & 386554 \\
\hline
28 & 0 & 0 & 0 & 0 & 0 & 0 & 0 & 72 & 5664 & 324792 \\
\hline
30 & 0 & 0 & 0 & 0 & 0 & 0 & 0 & 20 & 2164 & 154220 \\
\hline
32 & 0 & 0 & 0 & 0 & 0 & 0 & 0 & 0 & 72 & 10128 \\
\hline
34 & 0 & 0 & 0 & 0 & 0 & 0 & 0 & 2 & 198 & 15942 \\
\hline
36 & 0 & 0 & 0 & 0 & 0 & 0 & 0 & 0 & 56 & 7228 \\
\hline
38 & 0 & 0 & 0 & 0 & 0 & 0 & 0 & 0 & 2 & 570 \\
\hline
40 & 0 & 0 & 0 & 0 & 0 & 0 & 0 & 0 & 12 & 1464 \\
\hline
42 & 0 & 0 & 0 & 0 & 0 & 0 & 0 & 0 & 0 & 272 \\
\hline
44 & 0 & 0 & 0 & 0 & 0 & 0 & 0 & 0 & 0 & 12 \\
\hline
46 & 0 & 0 & 0 & 0 & 0 & 0 & 0 & 0 & 0 & 2 \\
\hline
48 & 0 & 0 & 0 & 0 & 0 & 0 & 0 & 0 & 0 & 0 \\
\hline
50 & 0 & 0 & 0 & 0 & 0 & 0 & 0 & 0 & 0 & 0 \\
\hline
\end{tabular}
\end{table}
\normalsize

A direct application of the formula gave the following numbers of occurrences of gaps in $U(41\#)$ (a list of pairs between a gap length and the number of times this gap length can be seen modulo $41\#$). These values satisfy $\sum D K(D,P) = P$ however I couldn't validate them against an exact enumeration from $U(41\#)$.

\begin{tiny}
\begin{verbatim}
[[2;8499244879125],[4;8499244879125],[6;11604850743850],[8;3682730287600],[10;4396116829650],[12;3474628537016],
[14;1475437583074],[16;741616123248],[18;949982718776],[20;230780018520],[22;252605556450],[24;199070346484],
[26;47895816494],[28;45885975600],[30;31307108764],[32;3887806536],[34;4391607498],[36;3247427048],[38;606169690],
[40;756088668],[42;363563276],[44;57663276],[46;32658714],[48;29314704],[50;11018808],[52;1684756],[54;3980340],
[56;537324],[58;371574],[60;127928],[62;9262],[64;14400],[66;7680],[68;332],[70;360],[72;48],[74;2]]
\end{verbatim}
\end{tiny}

\bibliographystyle{plain}
\bibliography{biblio}

\end{document}